\newcommand{%
    \def\svgwidth{1\columnwidth}
    \import{./}{.pdf_tex}
}[2][1]{%
    \def\svgwidth{#1\columnwidth}
    \import{./}{#2.pdf_tex}
}
\theoremstyle{definition}
\newtheorem{defi}{Definition}
\newtheorem{teo}[defi]{Theorem}
\newtheorem{lemma}[defi]{Lemma}
\newtheorem{clly}[defi]{Corollary}
\newcommand{\R}{{\mathbb{R}}}
\newcommand{\C}{{\mathbb{C}}}
\newcommand{\Z}{{\mathbb{Z}}}
\newcommand{\N}{{\mathbb{N}}}
\DeclareMathOperator{\id}{Id}
\begin{document}

\title{ QUOTIENTS OF TORUS ENDOMORPHISMS HAVE PARABOLIC ORBIFOLDS}
\author{ SOF\'IA LLAVAYOL AND JULIANA XAVIER}

\address{ Instituto de Matem\'atica y Estad\'istica ``Rafael Laguardia'', Facultad de Ingenier\'{\i}a,  Universidad de la Rep\'ublica, Montevideo, Uruguay.}
\email{jxavier@fing.edu.uy}
\email{sllavayol@fing.edu..uy}

\begin{abstract}
    In this work we show that every quotient of a torus endomorphism has a parabolic orbifold, answering a question of Mario Bonk and Daniel Meyer posed in \cite{BM}.
\end{abstract}
\maketitle

\section{Introduction}

If $f:S^2\to S^2$ is a branched covering map, we denote by $\deg(f)$ its degree and by $\deg(f,p)$ its local degree at $p\in S^2$. The set of \textit{critical points} of $f$ is 
$S_f= \big\{x:\ \deg(f,x)\geq 2\big\}$, and the set of \textit{postcritical points} is given by $P_f= \bigcup_{k\geq 1} f^k(S_f)$. Of course, $f(P_f)\subset P_f$ and $f^{-1}(P_f)$ 
contains both $S_f$ and $P_f$.\\

 A {\it Thurston map} is an orientation preserving branched covering of the 
sphere onto itself such that the postcritical set $P_f$ is finite. The ramification function $\nu_f$  of a Thurston map $f$ is 
the smallest among functions $\nu:S^2\to \N^*\cup\{\infty\}$ such that \begin{itemize}
                                                                         \item $\nu(x)=1$ if $x\notin P_f$
                                                                         \item $\nu(x)$ is a multiple of $\nu(y)\deg(f,y)$ for each $y\in f^{-1}(x)$.\\

                                                                        \end{itemize}

 The \textit{orbifold} associated to a Thurston map $f$ is the pair $\mathcal{O}_f= (S^2, \nu_f)$, and its \textit{Euler characteristic} is defined as
\[ \chi(\mathcal{O}_f)= 2 -\sum_{p\in P_f} \left(1 -\frac{1}{\nu_f(p)}\right).\] 

This number is non positive (see, for example, \cite{BM} Proposition 2.12). We say that the orbifold is \textit{parabolic} if $\chi(\mathcal{O}_f)= 0$ and \textit{hyperbolic}
if $\chi(\mathcal{O}_f)< 0$. \\

The  parabolic case is rather special, as these maps enjoy some sort of ``combinatoric privilege''.  For instance, they can be lifted to degree $d$ covering maps of either the open 
annulus or the torus. They can even act directly on the open annulus by restriction (for example, when there are exactly two totally invariant critical points).

\bigskip A \textit{quotient of a torus endomorphism} is a map $f:S^2\to S^2$ such that there exists a torus endomorphism $F:T^2\to T^2$ with $\deg(F)\geq 2$ and a branched covering
map $\pi:T^2\to S^2$ such that $\pi F= f\pi$.  We point out that neither
$F$ nor $\pi$ are unique.  We use the term \textit{QOTE} when referring to a quotient of a torus endomorphism. Every such map is a Thurston map (see Lemma 3.12 in \cite{BM} 
for a proof). \\

The so called
{\it Latt\`es maps} are famous examples of parabolic orbifolds that appear as the quotient of a torus endomorphisms. Also, the first example of a Thurston map admitting a 
completely invariant {\it indecomposable} continuum is a QOTE (see \cite{ply}).  This problem of finding an indecomposable
continuum as a completely invariant set of a Thurston map relates to a longstanding open problem in holomorphic dynamics: does there exists a rational function with an
indecomposable continuum as its Julia set?.  So, the example given in \cite{ply} gives a positive answer in the topological context (this example can actually be made smooth, but not 
Thurston equivalent to a rational map). \\

It is natural to ask whether every quotient of a torus endomorphism has a parabolic orbifold. The authors learnt this question from the 
amazing book \cite{BM} (see the remarks after Lemma 3.13). We refer the reader to the introduction in \cite{BM_lattes_type} for an excellent review and context on this problem. In that paper, 
the question was 
answered positively for those maps that admit an expanding behavior, but a general
answer remained an open problem.   \\

The purpose of this work is to solve this problem in full generality.\\

We prove the following:

\begin{teo}\label{teo}  If $f:S^2\to S^2$ is a quotient of a torus endomorphism, then $f$ has a parabolic orbifold. 
 
\end{teo}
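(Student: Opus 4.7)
The plan is to construct, directly from the branched covering $\pi$, an auxiliary ramification function $\nu$ on $S^2$ that is a valid candidate in the definition of $\nu_f$—so that $\nu_f\mid\nu$—and for which the associated orbifold $(S^2,\nu)$ has Euler characteristic zero. Since $\nu_f\mid\nu$ implies $\chi(\mathcal{O}_f)\ge\chi(S^2,\nu)$, combining with the general bound $\chi(\mathcal{O}_f)\le 0$ recalled in the introduction will force $\chi(\mathcal{O}_f)=0$.

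\textbf{Construction of $\nu$ and lifting.} Because a torus endomorphism $F$ is an unramified covering, $\deg(F,x)=1$ for every $x\in T^2$. Combining this with $\pi F=f\pi$ via multiplicativity of local degrees under composition yields the key identity
\[\deg(\pi,F(x)) \;=\; \deg(f,\pi(x))\,\deg(\pi,x),\qquad x\in T^2. \]
I define $\nu(p):=\mathrm{lcm}\{\deg(\pi,x):x\in\pi^{-1}(p)\}$ (truncated to $1$ off $P_f$ if necessary). The displayed identity gives the lifting condition: for $y\in f^{-1}(p)$ and any $x\in\pi^{-1}(y)$, we have $F(x)\in\pi^{-1}(p)$ and $\deg(\pi,F(x))=\deg(f,y)\deg(\pi,x)$ divides $\nu(p)$; taking the lcm over $x\in\pi^{-1}(y)$ yields $\deg(f,y)\nu(y)\mid\nu(p)$. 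One also shows $\pi(S_\pi)\subseteq P_f$ by combining the identity's consequence that $S_\pi$ is forward $F$-invariant with a finite-preimage argument tracking ramification backward along iterates of $F$; the truncation, if needed, does not spoil the lifting condition because any preimage of a non-postcritical point under $f$ is itself non-postcritical and non-critical. Hence $\nu_f\mid\nu$.

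\textbf{Parabolicity and main obstacle.} Applying Riemann--Hurwitz to $\pi:T^2\to S^2$ (with $\chi(T^2)=0$) and combining with the definition of orbifold Euler characteristic yields
\[\chi(S^2,\nu) \;=\; \sum_{p\in S^2}\left(\frac{1}{\nu(p)} - \frac{|\pi^{-1}(p)|}{\deg\pi}\right).\]
Since $\nu(p)\ge\max_{x\in\pi^{-1}(p)}\deg(\pi,x)$ and $\sum_{x\in\pi^{-1}(p)}\deg(\pi,x)=\deg\pi$, each summand is non-positive; moreover $\chi(S^2,\nu)=0$ holds precisely when $\pi$ has \emph{uniform} local degrees, i.e.\ all preimages of each $p$ share the common value $\deg(\pi,x)=\nu(p)$. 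The crux of the proof is thus to establish this uniformity for any $\pi$ arising in a QOTE structure. The idea is to exploit that, after translating a fixed point of $F$ to the origin, $F$ becomes a group endomorphism of $T^2$ whose iterated kernels $\ker F^n$ form an increasing family of finite subgroups acting by deck translations; these translations intertwine with $\pi$ via iterates of $f$, providing enough constraints on the local-degree profile of $\pi$ over a common point to force uniformity.

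\textbf{Conclusion.} With uniformity in hand, $\chi(S^2,\nu)=0$, and together with $\nu_f\mid\nu$ and the general bound $\chi(\mathcal{O}_f)\le 0$ this yields $\chi(\mathcal{O}_f)=0$, i.e.\ $\mathcal{O}_f$ is parabolic. The principal obstacle is the uniformity step: in the expanding setting of \cite{BM_lattes_type} invariant expanding metrics provide direct control on $\pi$-fibers, but the general, possibly non-expanding, case demands a delicate combinatorial and group-theoretic analysis on $T^2$ that is the technical heart of the proof.
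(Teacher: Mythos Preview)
Your reduction is sound but not new: the equivalence between ``$\deg(\pi,\cdot)$ is constant on each fiber $\pi^{-1}(p)$'' and ``$\mathcal{O}_f$ is parabolic'' is exactly the Bonk--Meyer parabolicity criterion (Lemma~3.13 in \cite{BM}), which the paper quotes as Lemma~\ref{crit}. Your Riemann--Hurwitz computation with $\nu=\mathrm{lcm}$ is a pleasant re-derivation of one direction of that criterion, but it leaves the problem exactly where it was before this paper: one must prove the uniformity of local degrees. You acknowledge this yourself as ``the principal obstacle'' and ``the technical heart,'' and indeed your proposal does not prove it.

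The sketch you offer for uniformity does not go through as stated. First, a torus endomorphism $F$ is only \emph{homotopic} to a linear map; translating a fixed point (whose existence is itself not guaranteed) to the origin does not make $F$ a group endomorphism, so ``$\ker F^n$'' has no literal meaning. Second, even granting an affine $F$, the deck translations $\tau$ of $F^n$ only give $f^n(\pi(x))=f^n(\pi(\tau x))$, i.e.\ $\pi(x)$ and $\pi(\tau x)$ lie in a common $f^n$-fiber; this yields the relation $\deg(f^n,\pi(x))\deg(\pi,x)=\deg(f^n,\pi(\tau x))\deg(\pi,\tau x)$, which compares points in an $F^n$-fiber, not in a $\pi$-fiber. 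There is no mechanism in your outline to pass from this to equality of $\deg(\pi,\cdot)$ on a fixed $\pi^{-1}(p)$.

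The paper's proof supplies precisely this missing mechanism, and it does \emph{not} attempt to prove uniformity for the given $\pi$. Instead it introduces $\pi$-injectivity of $F$ (injectivity of $F$ on each $\pi^{-1}(y)$ for $y\in Y$), shows that if this fails then a nontrivial subgroup $H$ of the deck group of $F$ also preserves $\pi$, and quotients $T^2$ by $H$ to obtain a new projection $\widehat\pi$ of strictly smaller degree (Theorem~\ref{coc}). Iterating, one reaches a projection for which the associated $\widehat F$ \emph{is} $\widehat\pi$-injective. For such a pair, a transversality argument (Lemmas~\ref{trans}--\ref{para}) shows directly that $\deg(\widehat\pi,\cdot)$ is constant on fibers, and the criterion finishes. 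The key conceptual move you are missing is that one is allowed to \emph{replace} $\pi$; trying to force uniformity for an arbitrary $\pi$ head-on, as your sketch attempts, is both harder and unnecessary.
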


In order to do so, we introduce the concept of $\pi$-injectivity (Definition \ref{def1}) which enables us to prove a new parabolicity criterion (Corollary \ref{int}) that we show
to be always verified after taking a suitable quotient (Theorem \ref{coc}).\\

\section{Notations and idea of proof}

For a branched covering map $f:X\to Y$ between compact and connected oriented surfaces, we denote by $\deg(f)$ its topological degree and by $\deg(f,x)$ its local degree at $x\in X$.
The set of all critical points of $f$ is denoted by $S_f$.
If $Z$ is another compact and connected oriented surface, and $f:X\to Y$ and  $g:Y\to Z$ are branched covering maps, then $g f$ is also a branched covering map and
  $\deg(g f,x) =
\deg(g,f(x))\deg(f,x)$ for all $x\in X$.  Also, $\deg (g f)=\deg (g) \deg (f)$.\\

From now on, we will assume $f:S^2\to S^2$ is a QOTE, with its associated maps $F: T^2\to T^2$ and $\pi: T^2 \to S^2$ as in the definition.  Note that $\deg(F,x) =1$ for all 
$x\in T^2$ as $F$ is a local homeomorphism.  We will often refer to $F$ as the {\it associated map} and to $\pi$ as the {\it associated projection}.\\

The branched covering map $\pi:T^2\to S^2$ verifies $\pi(S_\pi)= P_f$ (see \cite{BM} Lemma 3.12). Then, restricting $\pi$ to $\widetilde X= T^2\backslash \pi^{-1}(P_f)$ we obtain a
covering map $\widetilde X\to X= S^2\backslash P_f$. This covering space might not be normal and, in fact, it being normal implies that $f$ has a parabolic orbifold (see \cite{BM}
Lemma 3.13). Setting $Y= X\backslash f^{-1}(P_f)$ and $\widetilde Y= T^2\backslash \pi^{-1}(f^{-1}(P_f))$ again gives a covering space $\widetilde Y\to Y$, and note also that
the restrictions $f|_Y$ and $F|_{\widetilde Y}$ are both covering maps.\\

That is,  we have a commutative diagram of covering spaces:

\begin{center}\begin{tikzcd}
    \widetilde Y \arrow[d, "\pi"'] \arrow[r, "F"] & \widetilde X \arrow[d, "\pi"] \\
    Y \arrow[r, "f"'] & X
\end{tikzcd}\end{center}

Furthermore, for $n\geq 0$ set $Y_n= S^2\backslash f^{-n}(P_f)$ and call $\widetilde Y_n= T^2\backslash \pi^{-1}( f^{-n}(P_f))$. Note that for $n=0,1$ we obtain the spaces $X, \widetilde X, Y$ and 
$\widetilde Y$. \\

\begin{lemma}
    For all $n\geq 0$, $Y_{n+1}\subset Y_n$ and $f(Y_{n+1})= Y_n$.
\end{lemma}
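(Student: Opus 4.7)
The statement is purely set-theoretic and follows from two properties of $f$: the forward invariance $f(P_f)\subset P_f$ and the surjectivity of $f$ (as a branched covering of $S^2$). I will handle the two assertions separately.

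For the inclusion $Y_{n+1}\subset Y_n$, I would rewrite it as the equivalent claim $f^{-n}(P_f)\subset f^{-(n+1)}(P_f)$. Since $f(P_f)\subset P_f$, we have $P_f\subset f^{-1}(P_f)$, and applying $f^{-n}$ to both sides (which preserves inclusions) gives exactly
\[ f^{-n}(P_f)\subset f^{-n}\!\bigl(f^{-1}(P_f)\bigr)=f^{-(n+1)}(P_f), \]
whence $Y_{n+1}=S^2\setminus f^{-(n+1)}(P_f)\subset S^2\setminus f^{-n}(P_f)=Y_n$.

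For the equality $f(Y_{n+1})=Y_n$, I would prove the two inclusions by chasing points. For $\subset$: if $y=f(x)$ with $x\in Y_{n+1}$, then $f^{n+1}(x)\notin P_f$, so $f^n(y)=f^n(f(x))\notin P_f$, i.e.\ $y\in Y_n$. For $\supset$: given $y\in Y_n$, surjectivity of $f$ produces some $x\in S^2$ with $f(x)=y$; then $f^{n+1}(x)=f^n(y)\notin P_f$, so $x\in Y_{n+1}$ and thus $y\in f(Y_{n+1})$.

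There is no real obstacle here; the only thing to keep straight is the distinction between the set $P_f$ and its iterated preimages, and the fact that forward invariance of $P_f$ under $f$ is exactly what gives the nesting $Y_{n+1}\subset Y_n$. Surjectivity of $f:S^2\to S^2$ (automatic for any branched covering of a compact sphere of degree $\geq 1$) is what makes the second inclusion $Y_n\subset f(Y_{n+1})$ work.
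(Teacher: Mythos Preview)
Your proof is correct and follows essentially the same route as the paper: both derive $Y_{n+1}\subset Y_n$ from $P_f\subset f^{-1}(P_f)$ via $f^{-n}(P_f)\subset f^{-(n+1)}(P_f)$, and both use the equivalence $y\in Y_{n+1}\iff f(y)\in Y_n$ for the second assertion. If anything, you are slightly more careful: the paper records only the inclusion $f(Y_{n+1})\subset Y_n$ as its conclusion, while you explicitly invoke surjectivity of $f$ to obtain the reverse inclusion $Y_n\subset f(Y_{n+1})$.
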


\begin{proof}
    Because $P_f\subset f^{-1}(P_f)$, one gets $f^{-n}(P_f)\subset f^{-(n+1)}(P_f)$. Then we have $S^2\backslash f^{-(n+1)}(P_f) \subset S^2\backslash f^{-n}(P_f)$, which means
    $Y_{n+1}\subset Y_n$. On the other hand,  $y\in Y_{n+1}$ if and only if $f^n(f(y))= f^{n+1}(y)\not\in P_f$ if and only if $f(y)\not\in f^{-n}(P_f)$ if and only if $f(y)\in Y_n$. That is, 
    $f(Y_{n+1})\subset Y_n$.\\
\end{proof}

 One then obtains the commutative diagram:

\begin{center}\begin{tikzcd}
    \widetilde Y_{n+1} \arrow[d, "\pi"'] \arrow[r, "F"] & \widetilde Y_n \arrow[d, "\pi"] \arrow[r, "F"] & \cdots \arrow[r, "F"] & \widetilde Y \arrow[d, "\pi"'] \arrow[r, "F"] & \widetilde X \arrow[d, "\pi"] \\
    Y_{n+1} \arrow[r, "f"'] & Y_n \arrow[r, "f"'] & \cdots \arrow[r, "f"']  & Y \arrow[r, "f"'] & X
\end{tikzcd}\end{center}

\noindent where all maps involved are covering maps.\\

We finish this section explaining the idea behind the proof of Theorem \ref{teo}. \\

The property of $\pi$-injectivity of the map $F$ in Definition \ref{def1} below is the main tool needed in the proof. We prove in Theorem \ref{coc} that given a QOTE $f$, we  can 
factor out 
``superflous'' elements in the associated projection to obtain a new  
projection $\pi$  with associated map $F$ that has the desired $\pi$- injectivity property.  There is a flaw with $\pi$- injectivity: this property does not hold for every point in 
$S^2$. The concept of transversality
of $F$ and $\pi$ is the way to transfer $\pi$- injectivity  to the whole sphere (and not only the points of $Y$) (see Definition \ref{def2} and Lemma \ref{trans} below). Finally, it 
follows easily from the transversality of $F$ and $\pi$ that 
$\deg(\pi, \cdot)$ is constant over 
the fibers $\pi^{-1}(p)$ for all $p\in S^2$ (see Lemma \ref{para} below). Theorem \ref{teo} now follows from Bonk and Meyer's {\it parabolicity criterion}  (Lemma 3.13 in \cite{BM}):\\

\begin{lemma}\label{crit} Let $f$ be a QOTE and $\pi:T^ 2\to S^ 2$ as in the definition.  Then, $f$ has a parabolic orbifold if and only if $$\deg(\pi,x)=\deg(\pi,y)$$\noindent for all
$x,y\in T^2$ with $\pi (x)= \pi (y)$.\\
 
\end{lemma}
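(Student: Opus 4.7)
The plan is to establish a pointwise divisibility $\deg(\pi,x)\mid \nu_f(\pi(x))$ and then to read both implications off a single Riemann--Hurwitz computation for $\pi$. First, one verifies the explicit formula
\[ \nu_f(p) \;=\; \mathrm{lcm}\bigl\{\deg(f^n,q) : n\geq 1,\ q\in f^{-n}(p)\bigr\}, \]
since the right-hand side satisfies the ramification condition defining $\nu_f$ (using $\deg(f^{n+1},q)=\deg(f,f^n(q))\deg(f^n,q)$), while conversely iterating that condition gives $\nu(q)\deg(f^n,q)\mid \nu(p)$, and since $\nu(q)\geq 1$ this forces $\deg(f^n,q)\mid \nu(p)$; minimality of $\nu_f$ then pins down the formula.

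The key auxiliary claim is that $\deg(\pi,x)\mid \nu_f(\pi(x))$ for every $x\in T^2$. From $\pi F=f\pi$ and $\deg(F,\cdot)\equiv 1$, multiplicativity of local degrees gives, by induction on $n$,
\[ \deg(\pi, F^n(y)) \;=\; \deg(\pi,y)\,\deg(f^n,\pi(y)). \]
Fix $x\in T^2$. Since $F$ is an unramified self-covering of $T^2$ of degree $\geq 2$, the backward fibre $F^{-n}(x)$ has $\deg(F)^n$ points, so for $n$ large it is not contained in the finite set $S_\pi$. Choosing $y\in F^{-n}(x)\setminus S_\pi$ gives $\deg(\pi,y)=1$, whence $\deg(\pi,x)=\deg(f^n,\pi(y))$ with $\pi(y)\in f^{-n}(\pi(x))$; the claim follows from the lcm formula.

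Write $d=\deg(\pi)$. For each $p\in S^2$, $d=\sum_{x\in\pi^{-1}(p)}\deg(\pi,x)\leq |\pi^{-1}(p)|\,\nu_f(p)$ by the claim, with equality iff $\deg(\pi,\cdot)\equiv \nu_f(p)$ on $\pi^{-1}(p)$; rearranging, $d-|\pi^{-1}(p)|\leq d(1-1/\nu_f(p))$. Riemann--Hurwitz applied to $\pi$, together with $\pi(S_\pi)=P_f$, reads $\sum_{p\in P_f}(d-|\pi^{-1}(p)|)=2d$; summing the previous inequality yields $\chi(\mathcal{O}_f)\leq 0$, with $\chi(\mathcal{O}_f)=0$ iff equality holds at every $p$, iff $\deg(\pi,x)=\nu_f(\pi(x))$ for every $x$. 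This yields the forward implication. For the converse, if $\deg(\pi,\cdot)$ is fibrewise constant with value $c_p$, then $\tilde\nu(p):=c_p$ satisfies the ramification condition (by the identity $c_{f(y)}=c_y\deg(f,y)$, read off the displayed formula with $n=1$ applied to any $z\in\pi^{-1}(y)$), so $\nu_f\mid \tilde\nu$; combined with $c_p\mid \nu_f(p)$ from the claim this forces $c_p=\nu_f(p)$, and the termwise equality then gives $\chi(\mathcal{O}_f)=0$.

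The delicate point is the auxiliary claim; once it is in hand, everything else is Riemann--Hurwitz bookkeeping. The density-type argument in that step crucially uses that the associated map $F$ is an unramified self-covering of degree at least $2$, so that its backward iterates of any point eventually outnumber, and hence escape, the finite critical set $S_\pi$.
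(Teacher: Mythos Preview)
The paper does not prove this lemma at all: it is quoted as Bonk and Meyer's parabolicity criterion and attributed to Lemma~3.13 in \cite{BM}, with no argument given in the present paper. So there is no in-paper proof to compare your attempt against; what you have written is a self-contained proof of the cited result.

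Your argument is correct. The three ingredients --- the $\mathrm{lcm}$ description of $\nu_f$, the divisibility $\deg(\pi,x)\mid\nu_f(\pi(x))$ obtained by pulling back along $F^{n}$ to a $\pi$-regular point, and the Riemann--Hurwitz computation for $\pi:T^2\to S^2$ using $\pi(S_\pi)=P_f$ --- fit together as you describe, and the equality analysis in Riemann--Hurwitz gives exactly the equivalence. One small point worth making explicit: when you pass from $\deg(\pi,x)\mid\nu_f(p)$ to $\deg(\pi,x)\leq\nu_f(p)$ you are implicitly using $\nu_f(p)<\infty$. This is fine here: in the forward direction, equality in your Riemann--Hurwitz inequality at $p$ forces $|\pi^{-1}(p)|\,\nu_f(p)=d$, which is impossible for $\nu_f(p)=\infty$; in the converse direction, your function $\tilde\nu(p)=c_p$ is a finite function satisfying the ramification conditions, so minimality of $\nu_f$ already forces $\nu_f$ to be finite. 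Alternatively, one may simply invoke that a QOTE has no periodic critical points (Lemma~3.12 in \cite{BM}, also quoted in the paper), which rules out $\nu_f(p)=\infty$ from the start.
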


\section{$\pi$-injectivity and transversality of $F$ and $\pi$}

\begin{defi}\label {def1} 
 $F$ is \textit{$\pi$-injective} if $F$ is injective when restricted to each set $\pi^{-1}(y)$ with $y\in Y$. 
 
\end{defi} 

That is, for a $\pi$-injective endomorphism, $F(x)= F(y)$ implies $\pi(x)\neq \pi(y)$ for all $x,y\in \widetilde Y$.\\

\begin{defi} \label{def2}

We say that $\pi$ and $F$ are \textit{transverse} if given $x\in S^2$,  $y\in f^{-1}(x)$ and $\widetilde x\in \pi^{-1}(x)$, then there exists $\widetilde y\in \pi^{-1}(y)$ such that 
$F(\widetilde y)= \widetilde x$.
 
\end{defi}

\begin{lemma}\label{trans}
    If $F$ is $\pi$-injective, then $\pi$ and $F$ are transverse.
\end{lemma}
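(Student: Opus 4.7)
The plan is to first prove transversality for points $x\in Y$ using $\pi$-injectivity together with a cardinality argument on the fibers, and then extend the property to all of $S^2$ by a compactness/density argument, exploiting that $S^2\setminus Y = f^{-1}(P_f)$ is finite.

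For the first step, fix $x\in Y$, $y\in f^{-1}(x)$, and $\widetilde x\in \pi^{-1}(x)$. I would first observe that $y\in Y$: since $P_f\subset f^{-1}(P_f)$, we have $Y\subset S^2\setminus P_f$, so $x\notin P_f$, meaning $f(y)\notin P_f$, i.e. $y\in Y$. Because $\pi(S_\pi)=P_f$ and $x,y\notin P_f$, no point of $\pi^{-1}(x)\cup\pi^{-1}(y)$ is a critical point of $\pi$, so both fibers have exactly $\deg(\pi)$ elements. The commutation $\pi F=f\pi$ gives a well-defined map $F\colon \pi^{-1}(y)\to \pi^{-1}(x)$, which is injective by $\pi$-injectivity (as $y\in Y$); since source and target have the same finite cardinality, it is bijective. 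Therefore $\widetilde x$ has a preimage $\widetilde y\in \pi^{-1}(y)$.

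For the second step, fix arbitrary $x\in S^2$, $y\in f^{-1}(x)$, $\widetilde x\in \pi^{-1}(x)$. Since $P_f$ is finite and $f$ has finite degree, $S^2\setminus Y=f^{-1}(P_f)$ is finite, so $Y$ is dense in $S^2$. Pick a sequence $x_n\to x$ with $x_n\in Y$. Using the local normal form of the branched coverings $f$ and $\pi$ near $y$ and $\widetilde x$ respectively (each is locally modelled on $z\mapsto z^k$, hence open with preimages converging to any prescribed ramified point), I can lift the sequence to $y_n\to y$ with $f(y_n)=x_n$ and to $\widetilde x_n\to \widetilde x$ with $\pi(\widetilde x_n)=x_n$. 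Applying the first step to each $x_n\in Y$, I obtain $\widetilde y_n\in \pi^{-1}(y_n)$ with $F(\widetilde y_n)=\widetilde x_n$.

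Finally, compactness of $T^2$ provides a convergent subsequence $\widetilde y_n\to \widetilde y$; continuity of $\pi$ yields $\pi(\widetilde y)=\lim y_n=y$, and continuity of $F$ yields $F(\widetilde y)=\lim \widetilde x_n=\widetilde x$, establishing transversality. The main technical point to be careful about is the local liftability in the approximation step: one must verify that near a ramified point of a branched covering of surfaces, any sequence in the base converging to the branch value admits a preimage sequence converging to the specified ramified point. This is immediate from the local model $z\mapsto z^k$, but deserves explicit mention since ramified points are exactly where the inverse function theorem for coverings fails.
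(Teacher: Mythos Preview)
Your proof is correct and follows essentially the same approach as the paper: first a cardinality/bijectivity argument on regular fibers, then a limiting argument via compactness for the remaining finitely many points. The only cosmetic difference is that the paper splits according to whether $x\in X$ (the regular values of $\pi$) rather than $x\in Y$, which is slightly less restrictive but leads to the identical argument.
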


\begin{proof}
    Let $x\in S^2$ and take $y\in f^{-1}(x)$, $\widetilde x\in \pi^{-1}(x)$. We want to find $\widetilde y\in \pi^{-1}(y)\cap F^{-1}(\widetilde x)$. Set $n= \deg \pi$.\\

    Suppose first that $x\in X$, that is, $x$ is a regular value of $\pi$. Then $\pi^{-1}(x)= \big\{\widetilde x= \widetilde x_1, \ldots, \widetilde x_n\big\}$ has $n$ elements in 
    $\widetilde X$. As  $y\in f^{-1}(x)$, then $y\in Y\subset X$ is also a regular value of $\pi$ and $\pi^{-1}(y)= \big\{\widetilde y_1, \ldots, \widetilde y_n\big\}$ has $n$ 
    elements in $\widetilde Y$. Because $\pi F=f\pi$, we also have $F(\pi^{-1}(y)) \subset \pi^{-1}(x)$. By assumption, $F$ maps $\pi^{-1}(y)$ injectively to $\pi^{-1}(x)$. As both 
    sets have $n$ elements, it follows $F$ maps a (unique) point $\widetilde y\in \pi^{-1}(y)$ to $\widetilde x$.\\

 Now suppose that $x$ is a critical value of $\pi$ and take $x_n\to x$ a sequence of regular values of $\pi$. There exists $y_n\to y$ such that $f(y_n)= x_n$ for all $n$. Fix a 
 neighbourhood $\widetilde U$ of $\widetilde x$ such that $\widetilde U\cap \pi^{-1}(x)= \{\widetilde x\}$, and for all $n$ choose $\widetilde x_n\in \pi^{-1}(x_n)\cap \widetilde U$. 
 As each $x_n$ is a regular value of $\pi$, there exists $\widetilde y_n\in \pi^{-1}(y_n)$ such that $F(\widetilde y_n)= \widetilde x_n$. Let $\widetilde y$ be an accumulation point
 of $\widetilde y_n$. Continuity gives $F(\widetilde y)= \widetilde x$ and $\pi(\widetilde y)= y$, as desired.\\
\end{proof}

\begin{lemma}
    If $F$ is $\pi$-injective, then $F^n$ is $\pi$-injective for all $n\geq 1$.
\end{lemma}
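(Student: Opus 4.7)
The plan is to proceed by induction on $n$, reading ``$F^n$ is $\pi$-injective'' as the level-$n$ analogue of Definition \ref{def1}: $F^n$ is injective on each fibre $\pi^{-1}(y)$ with $y \in Y_n$. The base case $n=1$ is exactly the hypothesis, since $Y_1 = Y$. (Note that the naive literal reading with $y \in Y$ would be too strong in general — already in the standard Latt\`es example with $F = \times 2$, one can find $4$-torsion-free points $x$ whose double is $8$-torsion, producing $F^2(x) = F^2(-x)$ over a point in $Y$; the correct scale for $F^n$ is $Y_n$.)

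For the inductive step I would write $F^{n+1} = F \circ F^n$ and peel off one iterate at a time. Take $y \in Y_{n+1}$ and $x_1, x_2 \in \pi^{-1}(y)$ with $F^{n+1}(x_1) = F^{n+1}(x_2)$, and set $z_i = F^n(x_i)$. Iterating the intertwining relation $\pi F = f \pi$ gives $\pi(z_i) = f^n(y)$, while $F(z_1) = F(z_2)$. The condition $y \in Y_{n+1}$ unpacks as $f^{n+1}(y) \notin P_f$, which is the same as $f(f^n(y)) \notin P_f$, i.e.\ $f^n(y) \in Y$. Hence the hypothesis applies to the fibre $\pi^{-1}(f^n(y))$ and forces $z_1 = z_2$, that is, $F^n(x_1) = F^n(x_2)$. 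Since $Y_{n+1} \subset Y_n$ by the preceding lemma, the inductive hypothesis applies to $\pi^{-1}(y)$ and yields $x_1 = x_2$.

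No serious obstacle is expected: the whole argument is an application of the hypothesis to the ``outer'' $F$ followed by the inductive hypothesis applied to the ``inner'' $F^n$. The only care needed is bookkeeping on the shrinking sets of good base points $Y_n$, and checking that membership in $Y_{n+1}$ is strong enough to trigger both steps — which it is, precisely because $P_f$ is forward invariant, so $f^{n+1}(y)\notin P_f$ propagates backwards to say every intermediate iterate $f^k(y)$ ($k\le n+1$) is off $P_f$ as well.
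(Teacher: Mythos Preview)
Your proof is correct and follows essentially the same induction as the paper. The only cosmetic difference is the order of decomposition: the paper writes $F^{n+1}=F^n\circ F$ and uses $Y_{n+1}\subset Y$ together with $f(Y_{n+1})\subset Y_n$, whereas you write $F^{n+1}=F\circ F^n$ and use $Y_{n+1}\subset Y_n$ together with $f^n(y)\in Y$; both work for the same reason and require the same bookkeeping on the sets $Y_n$.
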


\begin{proof}
    We will prove the result by induction on $n$. Then the hypothesis gives the base case. For the inductive step, suppose $F^n$ is $\pi$-injective.  Note that this means that $F^n$ 
    is injective when restricted to each set $\pi^{-1}(y)$ with $y\in Y_n$, as $Y_n= S^2\backslash f^{-n}(P_f).$\\

    The fact that $Y_{n+1}\subset Y$ implies that $F$ is injective over $\pi^{-1}(y)$, $y\in Y_{n+1}$. Also, if $y\in Y_{n+1}$, then $f(y)\in f(Y_{n+1})\subset Y_n$ and so $F^n$ is
    injective in $\pi^{-1}(f(y))$. Then we have the following:
    \[
        \pi^{-1}(y) \xhookrightarrow{F} \pi^{-1}(f(y)) \xhookrightarrow{F^n} \pi^{-1}(f^{n+1}(y)),
    \]
    
   \noindent where the hooked arrows mean that the maps are injective. So $F^{n+1}$ maps $\pi^{-1}(y)$ injectively to $\pi^{-1}(f^{n+1}(y))$ for all $y\in Y_{n+1}$. This finishes the inductive step.\\
\end{proof}

Putting together the two previous lemmas we get:\\

\begin{clly}
    If $F$ is $\pi$-injective, then $\pi$ and $F^n$ are transverse for all $n\geq 1$.\\
\end{clly}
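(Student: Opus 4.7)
The plan is to treat the corollary as a packaging of the two preceding lemmas, using the second one to feed into the first. Concretely, given $F$ that is $\pi$-injective, the previous lemma gives that $F^n$ is $\pi$-injective for every $n\geq 1$, meaning that $F^n$ maps each fiber $\pi^{-1}(y)$ injectively into $\pi^{-1}(f^n(y))$ whenever $y\in Y_n$. I would then reapply Lemma \ref{trans}, but with the pair $(F^n, f^n)$ playing the role of $(F,f)$, to conclude that $\pi$ and $F^n$ are transverse.

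The substitution is legitimate because the proof of Lemma \ref{trans} only invokes two structural facts about the pair $(F,f)$: the semiconjugacy $\pi F=f\pi$ (so $F(\pi^{-1}(y))\subset \pi^{-1}(f(y))$), and the fact that after removing $\pi^{-1}(P_f)$ and $\pi^{-1}(f^{-1}(P_f))$ the restrictions are covering maps between the correct spaces. Both carry over verbatim to the iterate: one has $\pi F^n=f^n\pi$, and $F^n\colon \widetilde Y_n\to \widetilde X$ is a covering map (being a composition of covering maps in the diagram displayed just before Lemma \ref{crit}). The counting argument in the regular-value case of Lemma \ref{trans} then runs identically, since $\pi^{-1}(x)$ and $\pi^{-1}(y)$ both have $\deg(\pi)$ points for regular values $x,y$, and $\pi$-injectivity of $F^n$ forces the map on these fibers to be a bijection.

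For the case when $\widetilde x$ lies over a critical value of $\pi$, the same approximation trick as in Lemma \ref{trans} applies without change: pick regular values $x_k\to x$ of $\pi$, choose preimages $y_k\to y$ under $f^n$ (using continuity of branches of $f^{-n}$ near $y$), lift each $x_k$ to $\widetilde x_k\in\pi^{-1}(x_k)$ inside a prescribed neighborhood of $\widetilde x$, lift further to $\widetilde y_k\in\pi^{-1}(y_k)$ with $F^n(\widetilde y_k)=\widetilde x_k$ by the regular-value case already proved, and pass to an accumulation point to get the desired $\widetilde y\in \pi^{-1}(y)$ with $F^n(\widetilde y)=\widetilde x$.

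There is no real obstacle here; the only thing to be careful about is making it clear that ``$\pi$ and $F^n$ are transverse'' is the natural notion obtained from Definition \ref{def2} by replacing $(F,f)$ with $(F^n,f^n)$, and that the hypotheses of Lemma \ref{trans} transfer intact to this new pair. Once that is set up, the corollary is immediate.
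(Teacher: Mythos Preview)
Your proposal is correct and matches the paper's approach exactly: the paper simply writes ``Putting together the two previous lemmas we get'' and states the corollary, which is precisely your plan of feeding the iterate lemma ($F^n$ is $\pi$-injective) into Lemma~\ref{trans} applied to the pair $(F^n,f^n)$. Your additional justification for why the substitution is legitimate is sound and more explicit than the paper, but the underlying argument is identical.
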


\begin{lemma}\label{para}
    If $F$ is $\pi$-injective, then $\deg(\pi, \cdot)$ is constant over the fibers $\pi^{-1}(p)$ for all $p\in S^2$.
\end{lemma}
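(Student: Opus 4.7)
The starting point is the local-degree identity
\[
    \deg(\pi, F(\tilde p)) = \deg(f, \pi(\tilde p))\cdot \deg(\pi, \tilde p),
\]
a consequence of $\pi F=f\pi$ and $\deg(F,\cdot)\equiv 1$. Writing $d(\tilde p):=\deg(\pi,\tilde p)$, set
\[
    E := \{\,p\in S^2 : d \text{ is constant on } \pi^{-1}(p)\,\}.
\]
The plan is to prove $E=S^2$ by showing $E$ is totally $f$-invariant, contains $S^2\setminus P_f$, and that any periodic point of $f$ in the complement $E^c$ produces a contradiction.

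First I check that $E$ is forward $f$-invariant: transversality gives that $F:\pi^{-1}(y)\to\pi^{-1}(f(y))$ is surjective for every $y$, and the identity above shows it multiplies $d$ by $\deg(f,y)$, so constancy on $\pi^{-1}(y)$ transfers to $\pi^{-1}(f(y))$. The condition $f^{-1}(E)\subset E$ is in turn purely algebraic: $\pi^{-1}(y)$ partitions as $\bigsqcup_{\tilde p\in\pi^{-1}(f(y))}\bigl(F^{-1}(\tilde p)\cap\pi^{-1}(y)\bigr)$, and on each block $d$ equals $d(\tilde p)/\deg(f,y)$, so constancy on $\pi^{-1}(f(y))$ propagates down to $\pi^{-1}(y)$.

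Next, since $\pi:\widetilde X\to X$ is unramified, $d\equiv 1$ on $\pi^{-1}(p)$ for every $p\in S^2\setminus P_f$, so $E^c\subset P_f$ is finite and totally $f$-invariant; in particular $f$ acts on $E^c$ as a bijection, and the relation $f^{-1}(E^c)\subset E^c$ iterated $l$ times yields $f^{-l}(p)=\{p\}$ for every periodic $p\in E^c$ of period $l$.

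Finally, I argue by contradiction: assume $E^c\neq\emptyset$ and pick such a periodic $p$ of period $l$ together with any $\tilde p\in\pi^{-1}(p)$. Transversality applied to $F^l$ gives that $F^l:\pi^{-1}(p)\to\pi^{-1}(f^l(p))=\pi^{-1}(p)$ is surjective on a finite set, hence a bijection. But $F$ is a local homeomorphism, so $|F^{-l}(\tilde p)|=\deg(F)^l=\deg(f)^l\geq 2$, while the commutative diagram places $F^{-l}(\tilde p)\subset\pi^{-1}(f^{-l}(p))=\pi^{-1}(p)$; bijectivity of $F^l$ on $\pi^{-1}(p)$ forces $|F^{-l}(\tilde p)|=1$, contradicting $\deg(f)\geq 2$. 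The main obstacle to keep in mind is that both directions of $f$-invariance of $E$ are indispensable: forward invariance (via transversality) provides $f^{-l}(p)=\{p\}$, while backward invariance (algebraic) ensures that $E^c$ contains periodic points at all.
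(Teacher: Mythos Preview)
Your proof is correct, and it follows a genuinely different route from the paper's.

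The paper argues directly: given $\widetilde p,\widetilde p'\in\pi^{-1}(p)$, it pulls each back by $F^n$ (for $n$ large) to points $\widetilde q,\widetilde q'$ projecting outside $P_f$, so that $\deg(\pi,\widetilde q)=\deg(\pi,\widetilde q')=1$; the multiplicativity of local degrees then gives $\deg(\pi,\widetilde p)=\deg(f^n,q)$ and $\deg(\pi,\widetilde p')=\deg(f^n,q')$. Transversality of $F^n$ is invoked once, to produce $\widetilde y\in\pi^{-1}(q')$ with $F^n(\widetilde y)=\widetilde p$, which links the two chains and yields $\deg(f^n,q')=\deg(\pi,\widetilde p)$.

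You instead package the problem dynamically: the set $E$ of ``good'' fibers is shown to be totally $f$-invariant, forcing the finite complement $E^c\subset P_f$ to satisfy $f^{-1}(E^c)=E^c$; every $p\in E^c$ is then $f$-periodic with $f^{-l}(p)=\{p\}$, and the contradiction comes from comparing $\lvert F^{-l}(\widetilde p)\rvert=\deg(F)^l\geq 2$ with the bijectivity of $F^l$ on $\pi^{-1}(p)$ (obtained from transversality of $F^l$). Both arguments ultimately rest on the corollary that $\pi$ and $F^n$ are transverse for all $n$. The paper's computation is shorter and yields an explicit identification of $\deg(\pi,\widetilde p)$ with a local degree of an iterate of $f$; your approach trades this for a structural picture of the obstruction set and a clean dynamical contradiction. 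One minor remark: your closing sentence slightly misattributes the roles of the two invariance directions---in fact, forward invariance of $E$ alone (i.e.\ $f^{-1}(E^c)\subset E^c$) together with finiteness already forces $f^{-1}(E^c)=E^c$ by counting, so backward invariance is not strictly needed; this does not affect the validity of the argument.
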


\begin{proof}
    Let $\widetilde p$ and $\widetilde p'$ be points in $\pi^{-1}(p)$. Note that $\pi^{-1}(P_f)$ is a finite set and that the numbers $\# F^{-n}(\widetilde p)=
    \# F^{-n}(\widetilde p')\geq 2^n$. Then, we may take  $n$ sufficiently large such that there exists $\widetilde q\in F^{-n}(\widetilde p)$ and $\widetilde q'\in F^{-n}(\widetilde p')$ such 
    that $q= \pi(\widetilde q)$ and $q'= \pi(\widetilde q')$ lie outside $P_f$. Now recall that $\pi(S_{\pi})=P_f$, so $\deg(\pi, \widetilde q)= \deg(\pi, \widetilde q')= 1$.\\
    
    Then, 
    
    \[\begin{split}      
\deg(f^n, q)&= \deg(\pi, \widetilde q)\deg(f^n, q)= \deg( f^n\pi, \widetilde q)\\&
= \deg(\pi F^n , \widetilde q)=\deg(F^n, \widetilde q)\deg(\pi,\widetilde p)= 
    \deg(\pi, \widetilde p).\end{split}\]

    \noindent Analogously, $\deg(f^n, q')= \deg(\pi, \widetilde p')$. By the 
    previous corollary, we know $\pi$ and $F^n$ are transverse. That is,  there exists $\widetilde y\in \pi^{-1}(q')$ such that $F^n(\widetilde y)= \widetilde p$. We point out that the
    crucial point here is that $F^n$ maps $\widetilde y$ to $ \widetilde p$ and not $ \widetilde p'$. Note also
    that again $\deg(\pi, \widetilde y)= 1$ as $q'\not\in P_f$. Then, \\ $$\deg(\pi, \widetilde p)= \deg(\pi F^n,\widetilde y)= \deg(f^n \pi, \widetilde y)= \deg(f^n, q')= 
    \deg(\pi, \widetilde p').$$\\
    \noindent This finishes the proof.\\
\end{proof}

Now, using the parabolicity criterion in Lemma \ref{crit} (Lemma 3.13 in \cite{BM}), we get the following result:\\

\begin{clly}\label{int}
    If $F$ is $\pi$-injective, then $f$ has a parabolic orbifold.
\end{clly}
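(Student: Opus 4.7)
The plan is essentially to string together the two previous results: the fiber-constancy of $\deg(\pi,\cdot)$ established in Lemma \ref{para}, and the parabolicity criterion of Bonk and Meyer recalled in Lemma \ref{crit}.

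First I would observe that the hypothesis of $\pi$-injectivity of $F$ is exactly the hypothesis of Lemma \ref{para}. So I can directly invoke that lemma to conclude that for every $p \in S^2$ and every pair of points $\widetilde p, \widetilde p' \in \pi^{-1}(p)$, we have $\deg(\pi, \widetilde p) = \deg(\pi, \widetilde p')$.

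Next, I would appeal to Lemma \ref{crit}: since $f$ is a QOTE with associated projection $\pi$, and the local degree of $\pi$ is constant on every fiber of $\pi$, the parabolicity criterion asserts precisely that $f$ has a parabolic orbifold. This is the desired conclusion.

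There is no real obstacle here — the corollary is a one-line assembly of Lemma \ref{para} (which does all the dynamical work, using transversality and the $\pi$-injectivity of iterates of $F$) with the external Lemma \ref{crit} from \cite{BM}. The conceptual heavy lifting has already been done in the preceding lemmas, so this corollary merely records the consequence for the orbifold.
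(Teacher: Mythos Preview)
Your proposal is correct and matches the paper's argument exactly: the corollary is obtained by combining Lemma~\ref{para} (constancy of $\deg(\pi,\cdot)$ on fibers) with the parabolicity criterion of Lemma~\ref{crit}. The paper itself presents this as an immediate consequence with no additional steps.
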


The question arises if every associated map $F$ of a QOTE $f$ is $\pi$-injective. As we will see next, this
is not always the case.

\section{Example}

Before proceeding any further we describe a simple example in great detail to illustrate the ideas and key concepts on this work. In particular, we construct explicit examples
of  $\pi$-injective  and non $\pi$-injective associated maps.\\

This section is independent of the rest of the paper; the whole proof
of Theorem \ref{teo} is contained in the remaining sections. The reader 
may skip this one if desired.  \\

Let $\Gamma$ be the subgroup of automorphisms of the complex plane $\mathbb{C}$ generated by $z\mapsto z+1$ and $z\mapsto z+i$. Let $\Gamma_0$ be the one generated by $\Gamma$ and
the map $z\mapsto -z$. The space $\mathbb{C}/\Gamma$ is a torus $T^2$ and $\mathbb{C}/\Gamma_0$ is a sphere $S^2$. The quotient projection 
$\pi: \mathbb{C}/\Gamma \to \mathbb{C}/ \Gamma_0$ is a two-fold branched covering of the torus onto the sphere.\\

Let $\widetilde F: \C \to \C$, $\widetilde F(z)=2z$.  Then, $\widetilde F$ projects to $F: T^2 \to T^2$ and to 
$f:S^2\to S^2$. That is, one obtains $\pi F=f\pi$ and the projected map $f$ is a QOTE. Note that $S_\pi=\{\bf{(0,0),(1/2,0), (1/2, 1/2),(0,1/2)}\}$, where the boldface means their
class $\mod \Z^2$. 
As was already pointed out, $P_f=\pi (S_\pi)$. Note  that $\pi(S_\pi)\subset S^2\backslash S_f $; that is, there are no critical points of $f$ inside $P_f$.
 As $\deg(f)=4$, $f$ has $6$ critical points by the Riemann-Hurwitz formula.  Namely, the image by $\pi$ of the points 
 $\{{\bf{(0,1/4),(1/4,0), (1/4, 1/4),(1/2,1/4), (1/4,1/2), (1/4,3/4)}}\}.$ We then have $S_f=f^{-1}(P_f)\backslash P_f$.\\
    
Note that $\pi$ carries each of its critical points to some $x\in S^2$ such that $\#\pi^{-1}(x)=1$.  So, 
if $y\in f^{-1}(x)\cap \pi(S_{\pi})^c$ it is impossible for $F$ to be injective over $\pi^{-1}(y)$, as $\pi^{-1}(y)$ is a set of two points that is mapped to the singleton $\pi^{-1}(x)$.  However, 
as an easy computation shows, if we keep $y\in Y=S^2\backslash f^{-1}(P_f)$ then $F$ is injective over $\pi^{-1}(y)$.  Indeed, the $\pi$- fiber of a point $y\in S^2$ lifts to $\C$ as the 
points $\pm z + (m,n), (m,n)\in \Z^2$, where $\pi({\bf z})=\pi({\bf -z})=y$.  If $F$ is not injective over $\pi^{-1}(y)$, this means that $2z=-2z+(m,n)$ for some integers $m$ and $n$.  Equivalently,
$4z\in \Z^2$ or $z\in \frac{1}{4}\Z^2$.  But then, $\widetilde F(z)=2z \in \frac{1}{2}\Z^2$, and $\frac{1}{2}\Z^2$ is exactly the lift to $\C$ of $S_\pi$, meaning 
${\bf z}\in F^{-1}(S_\pi)$ or equivalently $y\notin Y$. \\

However, even when restricting to the right subset of the sphere $Y$, one can mess with fiber injectivity easily.  For instance, define $\widehat \pi=\pi F$.  Then, 
$f\widehat \pi =f\pi F=\pi F F=\widehat \pi F$, and then $f$ is also a QOTE with projection $\widehat \pi: T^2\to S^2$ and the same map $F:T^2\to T^2$.  Of course by construction, 
$F$ is not 
 injective over the sets $\widehat\pi^{-1}(y), y\in Y$.  These ideas are illustrated in Figure 1. \\
 
This is what motivates Theorem \ref{coc} in the following section.  We need to find an
appropriate pair of endomorphism and projection to sort this ``problem''. \\

\section{Parabolicity}

We show in this section that we  can factor out ``superflous'' elements in the associated projection $\pi$ of a QOTE $f$ to obtain a new 
projection that has the desired fiber injectivity property.\\

Let $G= \big\{\psi:T^2\to T^2:\ F\psi=F\big\}$  be the group of covering transformations of $F$ and let $H= \big\{ \psi\in G:\ \pi\psi= \pi\big\}$. It is not hard to check that $H$
is a subgroup of $G$.

\begin{lemma} If $F$ is not $\pi$-injective, then $H$ is nontrivial.
 
\end{lemma}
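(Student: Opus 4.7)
The plan is to exploit the failure of $\pi$-injectivity to manufacture the required nontrivial deck transformation. By hypothesis there exist distinct points $x,y\in\widetilde Y$ with $F(x)=F(y)$ and $\pi(x)=\pi(y)$, and the goal is to find $\psi\in G$ with $\psi(x)=y$ and then show that this $\psi$ automatically satisfies $\pi\psi=\pi$ on all of $T^2$.

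For the first step I would use that the covering $F\colon T^2\to T^2$ is normal: since $\pi_1(T^2)\cong\Z^2$ is abelian, every subgroup is normal, so $G$ acts transitively on each fiber of $F$. Hence there exists $\psi\in G$ with $\psi(x)=y$, and $\psi\neq\id$ because $x\neq y$. At this stage we only know $\pi\psi(x)=\pi(y)=\pi(x)$ at the single point $x$.

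The heart of the proof is promoting this pointwise equality to $\pi\psi\equiv\pi$. Using $F\psi=F$ together with $\pi F=f\pi$ one computes
\[
f\circ(\pi\psi)=(\pi F)\psi=\pi(F\psi)=\pi F=f\circ\pi,
\]
so $\pi$ and $\pi\psi$ are two continuous lifts of the same map $f\pi\colon T^2\to S^2$ along $f$, and they agree at $x$. Because $f$ is only a branched covering one cannot apply lift uniqueness on all of $T^2$ directly, so I would retreat to the unramified square $\widetilde Y\to Y\to X$ already set up in the excerpt. The set $\widetilde Y$ is $\psi$-invariant (immediate from $F\psi=F$ and the identity $\pi^{-1}(f^{-1}(P_f))=F^{-1}(\pi^{-1}(P_f))$), and both $\pi|_{\widetilde Y}$ and $(\pi\psi)|_{\widetilde Y}$ take values in $Y$. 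Since $\widetilde Y$ is a torus minus finitely many points, hence connected and locally path connected, the uniqueness of lifts for the honest covering map $f\colon Y\to X$ forces $\pi\psi=\pi$ on all of $\widetilde Y$. Density of $\widetilde Y$ in $T^2$ together with continuity extends the equality to all of $T^2$, placing $\psi$ in $H\setminus\{\id\}$.

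The only delicate point is this lifting step, which is handled by passing from the branched map $f$ to the genuine covering $f\colon Y\to X$; everything else is formal bookkeeping with the two commuting relations $\pi F=f\pi$ and $F\psi=F$, plus the abelianness of $\pi_1(T^2)$.
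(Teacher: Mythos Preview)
Your proposal is correct and follows essentially the same route as the paper: pick two distinct points in $\widetilde Y$ witnessing the failure of $\pi$-injectivity, use normality of the covering $F$ (abelian $\pi_1(T^2)$) to find a deck transformation $\psi\in G$ sending one to the other, check $\psi(\widetilde Y)\subset\widetilde Y$, and then apply uniqueness of lifts through the honest covering $f\colon Y\to X$ to conclude $\pi\psi=\pi$ on $\widetilde Y$, hence on $T^2$. The only cosmetic difference is your justification of $\psi$-invariance of $\widetilde Y$ via the identity $\pi^{-1}(f^{-1}(P_f))=F^{-1}(\pi^{-1}(P_f))$, whereas the paper argues it directly from $f\pi\psi=\pi F$; these are equivalent.
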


\begin{proof} As $F$ is not $\pi$-injective, there exists $\widetilde y_1\neq \widetilde y_2$ in $\widetilde Y$ be such that $\pi(\widetilde y_1)= \pi(\widetilde y_2)= y$ and 
    $F(\widetilde y_1)= F(\widetilde y_2)= \widetilde x$.  Because the fundamental group of $T^2$ is abelian, the covering $F$ is normal. Then, as $\widetilde y_1, \widetilde y_2$ are
    both in the fiber
    $F^{-1}(\widetilde x)$, there exists $\varphi \in G$ satisfying $\varphi(\widetilde y_1)= \widetilde y_2$.  Note that $\varphi\neq \id$ as $\widetilde y_1\neq \widetilde y_2$.\\

    First we claim that $\varphi(\widetilde Y)\subset \widetilde Y$. Indeed, for $\widetilde y\in \widetilde Y$ note that $f\pi \varphi(\widetilde y)= \pi F \varphi(\widetilde y)= \pi F(\widetilde y)\in X$, and so $\pi \varphi(\widetilde y)\in Y$. It follows that $\varphi(\widetilde y)\in \widetilde Y$. This gives a well defined map $\pi \varphi|_{\widetilde Y}: \widetilde Y\to Y$ such that $f \pi\varphi= \pi F\varphi= \pi F$.

    \begin{center}\begin{center}\begin{tikzcd}
        \widetilde Y \arrow[d, "F"'] \arrow[r, "\pi", "\pi\varphi"'] & Y \arrow[d, "f"] \\
        \widetilde X \arrow[r, "\pi"'] & X
    \end{tikzcd}\end{center}\end{center}

    Now, $f:Y\to X$ is a covering map and both $\pi |_{\widetilde Y}$ and $\pi\varphi |_{\widetilde Y}$ are $f$-lifts of $\pi F:\widetilde Y\to X$. What is more, 
    $\pi(\widetilde y_1)= \pi(\widetilde y_2)= \pi\varphi(\widetilde y_1)$. This two conditions give $\pi= \pi\varphi$ on $\widetilde Y$, and then $\pi= \pi\varphi$ on $T^2$.
    That is, $\varphi\in H.$\\
 
\end{proof}

\begin{figure}[h]
    \centering%
    \def\svgwidth{\columnwidth}
    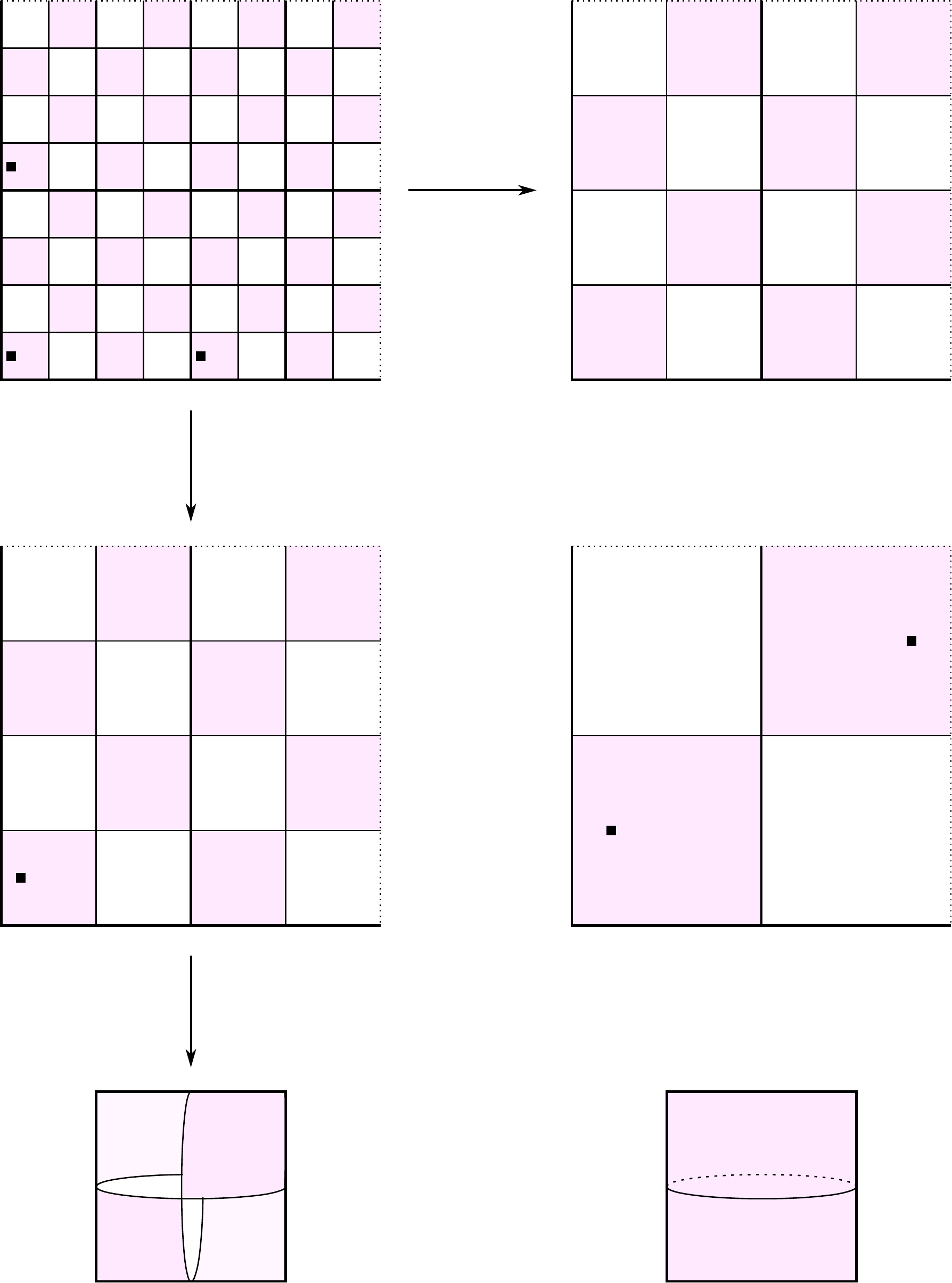

    \caption{Top row: $F$ is not $\pi F$-injective. Middle row: $F$ is $\pi$-injective}
\end{figure}

\begin{teo}\label{coc} There exists a branched covering map $\widehat\pi: T^2\to S^2$ and a torus endomorphism $\widehat F: T^2\to T^2$ of degree $\deg(\widehat F)\geq 2$ such that 
$\widehat\pi \widehat F= f \widehat\pi$ and $\widehat F$ is $\widehat\pi$-injective.
\end{teo}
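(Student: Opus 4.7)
The plan is to iterate the construction suggested by the preceding lemma until $\pi$-injectivity is achieved. Each iteration passes from the current torus to a torus quotient by a nontrivial subgroup of deck transformations, strictly reducing the degree of the projection while preserving all the structural data needed to run the construction again.

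Starting from the given pair $(\pi, F)$, if $F$ is already $\pi$-injective we simply take $\widehat\pi = \pi$ and $\widehat F = F$. Otherwise, the preceding lemma supplies a nontrivial subgroup $H \leq G$ with $F\psi = F$ and $\pi\psi = \pi$ for every $\psi \in H$. Since $G$ is the deck group of a covering of the torus by itself, a standard lift argument in $\R^2$ shows that every element of $G$ is a translation; hence $H$ acts freely by translations on $T^2$, and the orbit space $T^2_1 := T^2/H$ is again a torus. Let $q: T^2 \to T^2_1$ denote the quotient covering, of degree $|H| \geq 2$. The invariance relations $\pi\psi = \pi$ and $F\psi = F$ for $\psi \in H$ let $\pi$ and $F$ descend to a branched covering $\widehat\pi: T^2_1 \to S^2$ and a covering $\widehat F: T^2_1 \to T^2_1$, characterized by $\pi = \widehat\pi \circ q$ and $\widehat F \circ q = q \circ F$. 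One computes $\deg(\widehat\pi) = \deg(\pi)/|H|$, $\deg(\widehat F) = \deg(F) \geq 2$, and precomposing with the surjection $q$ in $\pi F = f\pi$ gives the semiconjugacy $\widehat\pi \widehat F = f\widehat\pi$.

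The new pair $(\widehat\pi, \widehat F)$ satisfies the same structural hypotheses as the original: $\widehat\pi$ is a branched covering from a torus to $S^2$ with $\widehat\pi(S_{\widehat\pi}) = P_f$ (since $q$ is unramified, $S_{\widehat\pi} = q(S_\pi)$), and $\widehat F$ is a torus covering of degree at least $2$ semiconjugated to $f$. Hence the preceding lemma applies verbatim to $(\widehat\pi, \widehat F)$: either $\widehat F$ is $\widehat\pi$-injective and we stop, or we repeat. Since at each iteration the degree of the current projection drops by a factor of at least $2$ while remaining at least $2$ (a torus is not homeomorphic to $S^2$, and a branched cover of degree $1$ would be a homeomorphism), the iteration terminates after at most $\log_2\deg(\pi)$ steps, producing the required $(\widehat\pi, \widehat F)$.

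The main obstacle is essentially bookkeeping: verifying that the quotient procedure preserves the full hypothesis package so the preceding lemma may be iteratively invoked. The key technical inputs are that deck transformations of a torus-to-torus covering are translations (hence $T^2/H$ is a torus and the construction stays within the category of torus endomorphisms), and that the critical-value relation $\pi(S_\pi) = P_f$ descends through the unramified covering $q$.
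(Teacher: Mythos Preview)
Your proposal is correct and follows essentially the same route as the paper: quotient by the nontrivial subgroup $H$, descend both $\pi$ and $F$ to the quotient torus, observe that the projection degree strictly drops, and iterate. One small imprecision: for a general torus endomorphism $F$ (not assumed affine), the deck transformations of $F$ need not literally be translations---what the lift argument gives is only that they act as the identity on $\pi_1$. The conclusion that $T^2/H$ is a torus still follows, since deck transformations of an orientation-preserving covering act freely and orientation-preservingly, so Riemann--Hurwitz forces $\chi(T^2/H)=0$; this is exactly how the paper argues the point.
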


\begin{proof}  If $F$ is $\pi$-injective, then the result is obvious.  If $F$ is not $\pi$-injective, then by the previous lemma, the subgroup $H$ of $G$ is nontrivial.  Then, we obtain a covering $\overline{\pi}: T^2\to T^2/H$ and $T^2/H$ is (homeomorphic to) a torus by the Riemann-Hurwitz formula. The non triviality of $H$ gives 
     $\deg(\overline{\pi})\geq 2$.\\

    The fibers $\overline{\pi}^{-1}(z)$ contain points that not only project by $\pi$ to a common point, but also their image by $F$ is constant. To check this, take $z\in T^2/H$ and 
    let $\widetilde z_1, \widetilde z_2\in \overline{\pi}^{-1}(z)$. By definition, there exists $\psi\in H$ such that $\psi(\widetilde z_1)= \widetilde z_2$. Because $\psi$ 
    verifies $F\psi= F$ and $\pi\psi= \pi$, the claim follows. Now, $F$ factors to a map $\widehat F: T^2/H\to T^2/H$, that is, $\widehat F \overline{\pi}= \overline{\pi} F$.
    Because $\overline{\pi}$ is a covering map, $\widehat F$ can be written locally as $\overline{\pi} F \overline{\pi}^{-1}$. Then $\widehat F$ is a local homeomorphism from the 
    torus to itself (i.e. a torus endomorphism).\\

    To finish, we define $\widehat\pi: T^2/H\to S^2$ as follows. For $z\in T^2/H$ choose any $\widetilde z\in \overline{\pi}^{-1}(z)$ and define $\widehat\pi(z)= \pi(\widetilde z)$.
    This definition does not depend on the choice of the point in $\overline{\pi}^{-1}(z)$ as was already pointed out. Again, because $\overline{\pi}$ is a covering map,
    $\widehat\pi$ can be written locally as $\pi \overline{\pi}^{-1}$. Then $\widehat\pi$ is a branched covering map from the torus onto the sphere.\\

    Note that $f\widehat\pi\overline{\pi}= f\pi=\pi F= \widehat\pi \overline{\pi} F=\widehat\pi \widehat F\overline{\pi}$ which implies $f \widehat\pi= \widehat\pi \widehat F$ as the 
    map $\overline{\pi}: T^2\to T^2/H$ is surjective. We then have a commutative diagram as below and $\deg(\widehat F)= \deg F = \deg f \geq 2$.

    \begin{center}\begin{tikzcd}
        T^2 \arrow[d, "\overline{\pi}"'] \arrow[dd, bend right=65, "\pi"'] \arrow[r, "F"] & T^2 \arrow[d, "\overline{\pi}"] \arrow[dd, bend left=65, "\pi"] \\
        T^2/H \arrow[d, "\widehat\pi"'] \arrow[r, "\widehat F"] & T^2/H \arrow[d, "\widehat\pi"] \\
        S^2 \arrow[r, "f"'] & S^2
    \end{tikzcd}\end{center}

    What is more, $\deg(\pi)= \deg(\widehat\pi) \deg(\overline{\pi})$ and $\deg(\overline{\pi})\geq 2$. One can conclude $\deg(\pi)> \deg(\widehat\pi)$. In case $\widehat F$ is not
    $\widehat\pi$-injective, we can repeat the process, and it will end since the degree of the projection strictly decreases.\\
\end{proof}

We are now ready to prove Theorem \ref{teo}:\\

\begin{proof}  The previous theorem allows us to suppose that $F$ is $\pi$-injective.  Now, the result follows from Corollary \ref{int}.
 
\end{proof}

\section{Characterization of Lattt\`es-type maps}

In this section we use Theorem \ref{teo} to finish the work started in Chapter 3 in \cite{BM}.\\

We will need some definitions and notations.\\

If $\nu_f$ is the ramification function of a Thurston map $f$, then $\{p\in S^2:\nu_f(p) \geq 2\}=P_f$ is a finite set.  If we label these points
$p_1,\dots,p_n$ such that $2\leq\nu_f(p_1)\leq\ldots \leq \nu_f(p_n)$, then the $n$-tuple $(\nu_f(p_1), \ldots, \nu_f(p_n))$ is called the {\it signature} of 
$(\mathcal{O}_f, \nu_f)$.\\

If $f,g:S^2\to S^2$ are Thurston maps, we say that they are {\it Thurston equivalent} if there exists homeomorphisms $h_0, h_1:S^2\to S^2$ that are isotopic rel. $P_f$ and 
satisfy $gh_1=h_0f$.\\

Let $G$ be a group of homeomorphisms acting on $\C$. The group $G$ acts {\it properly discontinuously} on $\C$ if for each compact set $K\subset \C$ there are only finitely many maps
$g\in G $ such that $g(K)\cap K\neq \emptyset$.\\

We say that $G$ is a {\it crystallographic group} if each element $g\in G$ is an orientation preserving isometry of $\C$ and
if the action of $G$ on $\C$ is properly discontinuous and cocompact.\\

We call a map $A:\R^2\to \R^2$ {\it affine}, if it has the form $A(u)=L_A(u)+u_0, u,u_0\in \R^2$, where $L_A$ is linear.\\

We say that $f:S^2\to S^2$ is a {\it Latt\`es-type map} if there exists a crystallographic group $G$, an affine map $A:\R ^2\to \R ^2$ with $\det(L_A)>1$ that is $G$-equivariant,
and a branched covering map $p:\R^2\to S^2$ induced by $G$ such that $fp=pA$.\\

It is a natural question whether every quotient of a
torus endomorphism $f$ is Thurston equivalent to a Latt\`es-type map.  This question was also asked in \cite{BM}, but Theorem \ref{teo} was needed for a proof:\\

\begin{teo}   The following are equivalent:

\begin{itemize}
 \item [(i)] $f$ is a Thurston map with parabolic orbifold and no periodic critical points.

\item [(ii)] $f$ is Thurston equivalent to a Latt\`es-type map.

\item[(iii)] $f$ is Thurston equivalent to a quotient of a torus endomorphism.
\end{itemize}

\end{teo}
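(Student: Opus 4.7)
The plan is to prove the equivalences via the cycle $(\mathrm{i}) \Rightarrow (\mathrm{ii}) \Rightarrow (\mathrm{iii}) \Rightarrow (\mathrm{i})$. The implication $(\mathrm{i})\Rightarrow(\mathrm{ii})$ is already established in Chapter 3 of \cite{BM}: every Thurston map with parabolic orbifold and no periodic critical points is Thurston equivalent to a Latt\`es-type map. So only the other two implications need attention here.

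For $(\mathrm{ii})\Rightarrow(\mathrm{iii})$, since being Thurston equivalent to a QOTE is transitive, it suffices to show that every Latt\`es-type map is itself a QOTE. Let $f$ satisfy $fp=pA$, and let $G_0\le G$ be the subgroup of pure translations. By Bieberbach's theorem, $G_0$ is normal and of finite index in $G$, so $T^2:=\R^2/G_0$ is a torus and $p$ factors as $\R^2\to T^2\xrightarrow{\pi} S^2$ with $\pi$ a branched covering. The $G$-equivariance of $A$ forces $AG_0A^{-1}\subset G_0$ (conjugating a translation by an invertible affine map produces a translation), so $A$ descends to an endomorphism $F:T^2\to T^2$ of degree $\det(L_A)\ge 2$, and the relation $fp=pA$ passes to the quotient as $f\pi=\pi F$, exhibiting $f$ as a QOTE.

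For $(\mathrm{iii})\Rightarrow(\mathrm{i})$, both parabolicity of the orbifold and absence of periodic critical points are invariants of Thurston equivalence, since an isotopy rel $P_f$ preserves the postcritical set, the dynamics on it, and all local degrees. So it suffices to verify both properties for any QOTE $g$. Parabolicity is exactly Theorem \ref{teo}. For the second property, suppose by contradiction that $p$ is a periodic critical point of $g$ of period $k$; replacing $g$ by $g^k$ (still a QOTE, with projection $\pi$ and endomorphism $F^k$) I may assume $g(p)=p$ and $\deg(g,p)\ge 2$. Since $\deg(F,\cdot)=1$, for every $\widetilde p\in\pi^{-1}(p)$ the point $F(\widetilde p)$ lies in the finite set $\pi^{-1}(p)$, and
\[ \deg(\pi,F(\widetilde p))=\deg(\pi F,\widetilde p)=\deg(g\pi,\widetilde p)=\deg(g,p)\cdot\deg(\pi,\widetilde p). \]
Iterating yields $\deg(\pi,F^n(\widetilde p))=\deg(g,p)^n\cdot\deg(\pi,\widetilde p)$, which exceeds $\deg(\pi)$ for large $n$, a contradiction.

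The only genuinely new ingredient is the invocation of Theorem \ref{teo} in the last implication; otherwise the main technical point is the use of Bieberbach's theorem to extract the translation torus underlying a Latt\`es-type map. I do not expect any serious obstacle beyond carefully recording these invariance statements and the quotient construction.
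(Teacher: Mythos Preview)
Your proof is correct and follows the same cyclic structure $(\mathrm{i})\Rightarrow(\mathrm{ii})\Rightarrow(\mathrm{iii})\Rightarrow(\mathrm{i})$ as the paper, with the key new input being Theorem~\ref{teo} in the last implication. The differences are in what you choose to prove versus cite. For $(\mathrm{ii})\Rightarrow(\mathrm{iii})$ the paper simply invokes Proposition~3.5 of \cite{BM}, whereas you rederive it via Bieberbach's theorem; your argument is fine (the point $AG_0A^{-1}\subset G_0$ works because $G$-equivariance gives $AG_0A^{-1}\subset G$, and the conjugate of a translation by an affine map is again a translation, hence lies in $G_0$). For $(\mathrm{iii})\Rightarrow(\mathrm{i})$ the paper argues via signature invariance (Proposition~2.15 in \cite{BM}) together with the classification of parabolic signatures with periodic critical points, and cites Lemma~3.12 of \cite{BM} for the absence of periodic critical points in a QOTE; you instead argue directly that parabolicity and the absence of periodic critical points are Thurston-equivalence invariants, and give a self-contained degree-growth argument for the latter (which is essentially the proof of Lemma~3.12). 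Your route is more self-contained; the paper's is shorter by citation. One small point worth tightening in your write-up: the sentence justifying Thurston-invariance is compressed---it would be cleaner to spell out that $h_0|_{P_f}=h_1|_{P_f}$ conjugates $f|_{P_f}$ to $g|_{P_g}$ and that $\deg(f,p)=\deg(g,h_0(p))$ for $p\in P_f$, so periodic critical points correspond.
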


\begin{proof}
  Proposition 3.6 in \cite{BM} shows that (i) and (ii) are equivalent.  By Proposition 3.5 in \cite{BM} every Latt\`es-type map is a quotient of a torus endomorphism, and so (ii)
  implies (iii). To see  (iii) implies (i), we first remark that Thurston equivalent maps have the same signatures (see \cite{BM}, Proposition 2.15).  Then, by Theorem \ref{teo} 
  $f$ has parabolic orbifold because a QOTE does. Morover, if a Thurston map with parabolic orbifold has periodic critical points, then the signature 
  is either
  $(\infty, \infty)$ or $(2,2,\infty)$ (\cite{BM} Propositions 2.9 and 2.14). To finish the proof we recall that a QOTE has no periodic critical points by Lemma 3.12 in \cite{BM}. \\
\end{proof}

\section{Acknowledgments}  Mario Bonk, Daniel Meyer and Rafael Potrie carefully read a preliminary version of this paper and their comments and suggestions improved this work in numerous ways.  We are 
so grateful, not only for this but for their words of encouragement that made us feel over the moon.

\end{document}